\begin{document}

\newtheorem{theorem}{Theorem}[section]
\newtheorem{lemma}[theorem]{Lemma}
\newtheorem{corollary}[theorem]{Corollary}
\newtheorem{conjecture}[theorem]{Conjecture}
\newtheorem{cor}[theorem]{Corollary}
\newtheorem{proposition}[theorem]{Proposition}
\theoremstyle{definition}
\newtheorem{definition}[theorem]{Definition}
\newtheorem{example}[theorem]{Example}
\newtheorem{claim}[theorem]{Claim}
\newtheorem{remark}[theorem]{Remark}

\newenvironment{pfofthm}[1]
{\par\vskip2\parsep\noindent{\sc Proof of\ #1. }}{{\hfill
$\Box$}
\par\vskip2\parsep}
\newenvironment{pfoflem}[1]
{\par\vskip2\parsep\noindent{\sc Proof of Lemma\ #1. }}{{\hfill
$\Box$}
\par\vskip2\parsep}


\newcommand{\R}{\mathbb{R}}
\newcommand{\T}{\mathcal{T}}
\newcommand{\C}{\mathcal{C}}
\newcommand{\G}{\mathcal{G}}
\newcommand{\Z}{\mathbb{Z}}
\newcommand{\Q}{\mathbb{Q}}
\newcommand{\E}{\mathbb E}
\newcommand{\N}{\mathbb N}

\newcommand{\barray}{\begin{eqnarray*}}
\newcommand{\earray}{\end{eqnarray*}}

\newcommand{\beq}{\begin{equation}}
\newcommand{\eeq}{\end{equation}}


\renewcommand{\Pr}{\mathbb{P}}
\newcommand{\as}{\text{a.s.}}
\newcommand{\Prob}{\Pr}
\newcommand{\Exp}{\mathbb{E}}
\newcommand{\expect}{\Exp}
\newcommand{\1}{\mathbf{1}}
\newcommand{\prob}{\Pr}
\newcommand{\pr}{\Pr}
\newcommand{\filt}{\mathcal{F}}
\DeclareDocumentCommand \one { o }
{%
\IfNoValueTF {#1}
{\mathbf{1}  }
{\mathbf{1}\left\{ {#1} \right\} }%
}
\newcommand{\Bernoulli}{\operatorname{Bernoulli}}
\newcommand{\Binomial}{\operatorname{Binom}}
\newcommand{\Beta}{\operatorname{Beta}}
\newcommand{\Binom}{\Binomial}
\newcommand{\Poisson}{\operatorname{Poisson}}
\newcommand{\Exponential}{\operatorname{Exp}}


\newcommand{\link}{\mbox{lk}}
\newcommand{\Deg}{\operatorname{deg}}
\newcommand{\vertexsetof}[1]{V\left({#1}\right)}
\renewcommand{\deg}{\Deg}
\newcommand{\oneE}[2]{\mathbf{1}_{#1 \leftrightarrow #2}}
\newcommand{\ebetween}[2]{{#1} \leftrightarrow {#2}}
\newcommand{\noebetween}[2]{{#1} \centernot{\leftrightarrow} {#2}}
\newcommand{\Gap}{\ensuremath{\tilde \lambda_2 \vee |\tilde \lambda_n|}}
\newcommand{\dset}[2]{\ensuremath{ e({#1},{#2})}}
\newcommand{\EL}{{ L}}
\newcommand{\ER}{{Erd\H{o}s--R\'{e}nyi }}
\newcommand{\zuk}{{\.{Z}uk}}


\newcommand{\frm}{\ensuremath{ 2\log\log m}}
\DeclareDocumentCommand \fuzz { o o }
{
\IfNoValueTF {#1}
  { \aleph_M }
  { \IfNoValueTF { #2 }
    { \aleph_{M}^{{#1}} }
    { \aleph_{M}^{{#1}}({#2})}
  }
}
\newcommand{\csubzero}{c_{0}}
\newcommand{\csubone}{c_{1}}
\newcommand{\csubtwo}{c_{2}}
\newcommand{\csubthree}{c_{3}}
\newcommand{\csubstar}{c_{*}}
\newcommand{\INE}{I^{\epsilon}}
\newcommand{\rsp}{1-C\exp(-md^{1/4}\log n)}
\newcommand{\lc}{\ensuremath{ \operatorname{light}(x,y)}}
\newcommand{\hc}{\ensuremath{ \operatorname{heavy}(x,y)}}
\DeclareDocumentCommand \pam { O{m} }
{
P_{{#1}}
}
\DeclareDocumentCommand \sca { O{j} }
{ Q_{ {#1} } }
\DeclareDocumentCommand \mga { O{j} }
{ A_{ {#1} } }
\DeclareDocumentCommand \mgb { O{j} O{c} }
{ Q_{ {#1} }^{ {#2} } }
\DeclareDocumentCommand \mgc { O{j} O{c} }
{ U_{ {#1} }^{ {#2} } }
\DeclareDocumentCommand \Filt { O{j} }
{ \mathscr{F}_{{#1}} }

\title{High degree vertices in the Power of Choice\\
model combined with Preferential Attachment}
\author{Yury Malyshkin}
\address{Tver State University
}
\email{yury.malyshkin@mail.ru}
\date{\today}
\maketitle

\begin{abstract}
We find assimpotics for the first $k$ highest degrees of the degree distribution in an evolving tree model combining the local choice and the preferential attachment.  
In the considered model, the random graph is constructd in the following way. At each step, a new vertex is introduced. 
Then, we connect it with one (the vertex with the largest degree is chosen) of $d$ ($d>2$) possible neighbors, which are sampled from the set of the existing vertices with the probability proportional to their degrees. It is known that the maximum of the degree distribution in this model has linear behavior. We prove that $k$-th highest dergee has a sublinear behavior with a power depends on $d$.  This contrasts sharply with what is seen in the preferential attachment model without choice, where all highest degrees in the degree distribution has the same sublinear order.  The proof is based on showing that the considered tree has a persistent hub by comparison with the standard preferential attachment model, along with martingale arguments.
\end{abstract}

\section{Introduction}
In the present work, we further explore how the addition of choice (see, e.g., \cite{DSKrM,KR13,MP14,MP15}) affects the standart preferential attachment model (see~\cite{barabasi,KrReLe}). The preferential attachment graph model is a time-indexed inductively constructed sequence of graphs, formed in the following way. We start with some initial graph and then on each step we add a new vertex and an edge between it and one of the old vertices, chosen with probability proportional to its degree. Many different properties of this model have been obtained in both the math and physics literature (see~\cite{barabasi, KrReLe,Mori, DvdHH}). 

In the current work, we are interested in the first $k$ maximums of the degree distribution. For the preferential attachment model, this problem is studied in~\cite{FFF04}.  It is shown in \cite{FFF04} that the $k$ highest degrees $\Delta_i(n),$ $i\in\{1,...,k\}$, at time $n$ satisfy
$$\frac{n^{1/2}}{g(n)}\leq\Delta_1(n)\leq g(n)n^{1/2}\, \text{and}\,  \frac{n^{1/2}}{g(n)}\leq\Delta_i(n)\leq \Delta_{i-1}(n)-\frac{n^{1/2}}{g(n)},\quad k\geq i\geq 2,$$
with high probability for any function $g(n)$ with $g(n)\rightarrow\infty$ as $n\rightarrow\infty$.
In~\cite{MP15}, the limited choice is introduced into the preferential attachment model. More specifically, at each step we independently (from each other) choose $d$ existing vertices with a probability proportional to their degrees and connect the new vertex with the vertex with the smallest degree. In~\cite{MP15}, it is shown that the maximal degree at time $n$ in such a model grows as $\log\log n/\log d$ with high probability. If instead of a vertex with the smallest degree we pick one with the highest degree, we would get the max-choice model that was introduced in \cite{MP14}. In~\cite{MP14}, the exact first-order asymptotics for the maximal degree in this model was obtained and almost sure convergence of the appropriately scaled maximal degree was shown.  In the current work, we provide such asymptotics for $k$ highest degrees.

Let us describe the max-choice model. Fix $d\in\mathbb{N}$, $d\geq 2$. Introduce a countable non-random set of vertices $V=\{v_i,\;i\in\mathbb{N}\}$. Define a sequence of random trees $\{ P_n \}$, $n\in\mathbb{N}$, by the following inductive rule. Let $P_1$ be the one-edge tree which consists of vertices $v_1$ and $v_2$ and an edge between them. Given $P_{n}$, we construct $P_{n+1}$ by adding one vertex and drawing one edge in the following way. 

First, we add a vertex $v_{n+2}$ to $P_n$, hence the vertices set $V(P_{n+1})$ of $P_{n+1}$ is $V(P_{n+1})=\{v_i,\;i=1,...,n+2\}$. Note that the randomness of $P_n$ caused by its edge set $\mathcal{E}_n$. Denote $\mathcal{F}_n=\sigma\{\mathcal{E}_1,...,\mathcal{E}_n\}$. Let $X^1_n,\ldots,X^d_n$ be i.i.d. vertices of $V( P_n )$ chosen with the conditional probability
\[
\Pr \left[
X^1_n = v_i|\mathcal{F}_{n}
\right] = \frac{\deg v_i(n)}{2n},\quad v_i\in V(P_n),
\]
where $\deg v_i(n)$ is the degree of $v_i$ in $P_n$ (note that, $\sum_{v_i}\deg v_i(n)=2n$). 

Second, create a new edge between $v_{n+2}$ and $Y_n,$ where $Y_n$ is whichever of $X^1_n$,...,$X^d_n$ has the largest degree. In the case of a tie, choose according to an independent fair coin toss (this choice will not affect the degree distribution). This model is called the \emph{max-choice preferential attachment tree model}.
For any fixed $k\in\mathbb{N}$, let $M_{1}(n)\geq M_2(n)\geq...\geq M_k(n)$ be the degrees of $k$ highest degree vertices at time $n$ (if there are less then $k$ vertices at time $n$ put $M_k(n)=1$). 

Let us formulate our main theorem:
\begin{theorem}
\label{thm:max_degree}

For $d\in\mathbb{N}$, $d>2$, $k\in\mathbb{N}$, $k>1$ and any $\epsilon>0$,
\[
\lim_{n\to\infty} \mathbb{P}(n^{c^{d-1}d/2-\epsilon}<M_k(n)<n^{c^{d-1}d/2+\epsilon}) = 1,
\]
where $c=1-x^{\ast}/2$, $x^{\ast}$ is the unique positive solution of the equation $1-(1-x/2)^{d}=x$ in the interval $0\leq x\leq 1$.
\end{theorem}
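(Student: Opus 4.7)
The plan is to reduce the analysis of non-hub vertices' degrees to a martingale estimate anchored in the linear-growth result $M_1(n)/n \to x^{\ast}$ from \cite{MP14}. Writing $\alpha := dc^{d-1}/2$, the central heuristic is that for any vertex $v$ that is not the (asymptotically unique) hub, the conditional probability that $v$ is the winning candidate at step $n+1$ equals
\[
\frac{d\,D_v(n)}{2n}\Bigl(1-\tfrac{M_1(n)}{2n}\Bigr)^{d-1} + O\!\bigl((D_v(n)/n)^{2}\bigr),
\]
where $D_v(n):=\deg v(n)$, since $v$ wins exactly when it is sampled and none of the other $d-1$ samples outranks it---which, up to lower-order corrections, amounts to none of them being the hub. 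Because $M_1(n)/(2n)\to x^{\ast}/2 = 1-c$, the effective drift of $D_v(n)$ is $\alpha D_v(n)/n$, suggesting $D_v(n)\asymp n^{\alpha}$.

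First I would record the hub facts: $M_1(n)/n\to x^{\ast}$ a.s.\ and $M_2(n)=o(n)$ a.s., together with a polynomial rate of convergence. As the abstract indicates, these come from comparison with standard preferential attachment, which yields a persistent hub; this gives the uniform asymptotic $(1-M_1(n)/(2n))^{d-1}\to c^{d-1}$ and ensures that only one vertex contributes a $\Theta(n)$ term to the total degree.

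For the upper bound, fix $\epsilon>0$ and for each vertex $v$ introduce
\[
S_n^{v} := D_v(n)\bigg/\prod_{m=n_0}^{n-1}\!\Bigl(1+\tfrac{\alpha+\epsilon/2}{m}\Bigr),
\]
which is comparable to $D_v(n)/n^{\alpha+\epsilon/2}$. The increment formula above shows that, once the hub has stabilized, $S_n^{v}$ is a nonnegative supermartingale, hence converges a.s.; a second-moment bound then yields $\pr(D_v(n)\ge n^{\alpha+\epsilon})\le n^{-\delta}$ for some $\delta=\delta(\epsilon)>0$, and a union bound over the at most $n+2$ non-hub vertices gives $M_k(n)\le n^{\alpha+\epsilon}$ for every $k\ge 2$ w.h.p. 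For the lower bound I would use the dual process $T_n^{v} := D_v(n)/\prod_{m=n_0}^{n-1}(1+(\alpha-\epsilon/2)/m)$, which is a nonnegative submartingale converging a.s.\ to some $W_v\ge 0$; the catch is that $\pr(W_v>0)$ is only positive, not one. To upgrade to a high-probability lower bound I would fix a large seed degree $K$ and time $T_0$, select $N\gg k$ candidate non-hub vertices which reach degree at least $K$ by time $T_0$, and use an $L^{2}$ estimate on $T_n^{v}$ to show $\pr(W_v=0\mid D_v(T_0)\ge K)\to 0$ as $K\to\infty$; taking $N$ large then produces at least $k$ vertices satisfying $D_v(n)\ge n^{\alpha-\epsilon}$.

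The main obstacle is this last step: controlling the correlations between the candidate non-hub vertices (which compete for the same edges) and obtaining quantitative decay of the degenerate-limit probability $\pr(W_v=0\mid D_v(T_0)\ge K)$ from a second-moment calculation on $T_n^{v}$. The upper bound is largely routine once one has a polynomial rate for $M_1(n)/n\to x^{\ast}$, but the lower bound genuinely depends on the detailed submartingale structure and on handling the transient pre-stabilization period of the hub.
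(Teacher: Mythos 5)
Your proposal takes a genuinely different route from the paper, and the route has gaps that the paper's structure is specifically designed to avoid. The paper proves Theorem~\ref{thm:max_degree} and Proposition~\ref{prop:persistent_hub} jointly by induction on $k$: an a.s.\ polynomial lower bound $M_{k_0}(n)\gtrsim n^{\gamma}$ (Lemma~\ref{lem:starting_low_bound}) feeds a comparison with the ordinary preferential-attachment random walk (Lemmas~\ref{lem:change_of_leader}, \ref{lem:change_of_leadership}) to establish a $k_0$-th persistent hub; once one knows $L_{k_0}(n)=1$ and $\hat c_{k_0}(n)\to c$ eventually, the increment of $M_{k_0}(n)$ has the exact conditional probability $p_{n,k_0}$, and the two-sided exponent $c^{d-1}d/2$ drops out of a pair of supermartingales driven by $\Delta_{k_0}(n)\to c^{d-1}d$. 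You instead propose to work vertex-by-vertex, avoiding both the induction and the persistent hub.

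The sharpest gap is in your lower-bound drift. For a generic non-hub vertex $v$, the probability that $v$ wins step $n+1$ is not $\approx \frac{d\,D_v(n)}{2n}\bigl(1-\frac{M_1(n)}{2n}\bigr)^{d-1}$: the exponent factor should be $\bigl(1-\frac{1}{2n}\sum_{w:\,D_w(n)>D_v(n)}D_w(n)\bigr)^{d-1}$, i.e.\ you must discount the \emph{total} degree mass of every vertex currently outranking $v$, not just the hub. That discount is $\approx c^{d-1}$ only if the vertices ranked between $v$ and the hub collectively carry $o(n)$ degree. For the $k$-th persistent hub this is exactly what the induction hypothesis supplies (via $\hat c_{k}(n)\to c$, i.e.\ $M_2,\dots,M_{k-1}=o(n)$), but for an arbitrary ``candidate'' $v$ that merely reached degree $K$ by time $T_0$ you have no control over how much mass sits above it, and establishing that $v$ ends up among the top $k$ is essentially what you are trying to prove---a circularity your $L^2$/second-moment plan does not break. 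The upper bound also needs more than you indicate: the Doob/Markov estimate on your supermartingale $S_n^{v_m}$ for a vertex born at time $m$ gives roughly $\Pr\bigl(D_{v_m}(n)>n^{\alpha+\epsilon}\bigr)\lesssim m^{-(\alpha+\epsilon/2)}n^{-\epsilon/2}$, and summing over $m\le n$ produces $\asymp n^{1-\alpha-\epsilon}$, which does not vanish since $\alpha=c^{d-1}d/2<1$ (Lemma~\ref{lem:c_est}); the union bound therefore does not close as stated. The paper sidesteps both problems by isolating a single persistent vertex for each rank $k$ and running the martingale on $M_k(n)$ itself, which is the ingredient your sketch is missing.
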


Our proof is based on the existence of the $k$-th \emph{persistent hub}, i.e. a single vertex that in some finite random time becomes the $k$-th highest degree vertex for all time after. Using this, instead of analyzing the $k$-th highest degree over all vertices we effectively only need to analyze the degree of just one vertex. The existence of the $k$-th persistent hub is stated in the following result. 

\begin{proposition}
\label{prop:persistent_hub}
There exist random variables $N_l$ and $K_l$, $1\leq l\leq k$, that are finite almost surely so that at any time $n \geq N_l$, $\deg v_{K_l}(n)=M_l(n)$ and $M_1(n)>M_2(n)>...>M_l(n)>\deg v_{i}(n)$ for any $i\neq K_1,...,K_l$.
\end{proposition}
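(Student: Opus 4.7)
The plan is to derive the proposition from two ingredients: a \emph{pairwise persistence} lemma and a \emph{finite contention} bound. Pairwise persistence says that for any fixed indices $i,j$, the indicator $\1\{\deg v_i(n)>\deg v_j(n)\}$ is almost surely eventually constant in $n$. Finite contention says that almost surely only finitely many distinct vertices ever hold a top-$l$ position. Granted both, I would take $F_l$ to be the a.s.\ finite set of vertices that ever enter the top $l$, fix the ordering inside $F_l$ by pairwise persistence after some random but a.s.\ finite time, and read off the persistent hubs $v_{K_1},\dots,v_{K_l}$ together with the required strict inequalities.

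To establish pairwise persistence I would study the gap $Z_n:=\deg v_i(n)-\deg v_j(n)$, which changes by at most one per step. The key monotonicity is that when $Z_n>0$, the conditional probability that $v_i$ is chosen as $Y_n$ strictly exceeds the conditional probability that $v_j$ is chosen: the higher-degree vertex has strictly larger per-sample inclusion probability, and it wins every head-to-head comparison inside the sample. Thus the drift of $Z_n$ has the same sign as $Z_n$. Passing to the embedded chain that ignores steps where neither of the two degrees changes reduces the problem to a random walk on $\Z$ whose transition ratios from positive states are bounded away from $1$; this yields an exponential return estimate of the form $\Pr(\text{hit }0 \mid |Z|\geq L)\leq q^L$ for some $q<1$. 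A Borel--Cantelli argument applied to successive returns to $0$ then shows only finitely many sign changes occur almost surely.

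For the finite contention step, I would show that the probability a vertex $v_m$ born at time $m-2$ ever reaches degree $\tau$ decays fast enough in $\tau$ (uniformly in $m$) that the expected number of vertices born after some time $m_0$ which ever reach degree $\tau$ tends to $0$ as $m_0\to\infty$. This bound can be obtained via a standard Doob-type estimate on the increments of $\deg v_m$ in the max-choice dynamics, using that $v_m$ can only gain an edge when it is sampled and wins the max comparison. Combining with $M_l(n)\to\infty$ almost surely --- which follows, for example, from the known linear growth of $M_1(n)$ together with a short induction on $l$ --- yields that only finitely many vertices are ever in the top $l$. Intersecting with pairwise persistence inside this finite set and relabelling produces the persistent hubs and the strict inequalities $M_1(n)>\cdots>M_l(n)>\deg v_i(n)$ for all $i\notin\{K_1,\dots,K_l\}$.

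The main obstacle I expect is the uniform-in-$n$ drift estimate in the pairwise step. Although the sign of the drift is transparent from the max-choice rule, the lower bound on the ratio of up- to down-transition probabilities from positive states depends on the random and evolving degree sequence of every other vertex. The cleanest route I see is to bound this ratio directly from the sampling rule, using only that $v_i$ has strictly larger degree than $v_j$ and handling the remaining $d-2$ samples conservatively; making this bound uniform in $n$ is the technical heart of the argument.
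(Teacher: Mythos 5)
Your high-level decomposition matches the paper's: the paper proves (i) finite contention (Lemma~\ref{lem:change_of_leader}: the set $J_k$ of vertices that are ever $k$-th maximal is a.s.\ finite) and (ii) pairwise persistence (Lemma~\ref{lem:change_of_leadership}: for each pair the number of times the degrees tie is a.s.\ finite), then combines them exactly as you describe. But there are two concrete gaps in your sketch.

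First, your pairwise step hinges on reducing to ``a random walk on $\Z$ whose transition ratios from positive states are bounded away from $1$.'' That uniform bound is false. In the embedded chain where one of $\deg v_i,\deg v_j$ changes, from the state $Z=\deg v_i-\deg v_j=1$ with both degrees large, the up-versus-down ratio is roughly $\deg v_i/\deg v_j=(b+1)/b\to 1$; there is no $q<1$ that works uniformly in $n$. You flag this as ``the technical heart'' but offer no way around it, and a straight random-walk/geometric-return argument cannot be made to close it. The paper sidesteps the issue entirely: it couples $(\deg v_i,\deg v_j)$ with the P\'olya urn of the no-choice preferential attachment model via Lemma 4.2 of \cite{MP14} (the minimum of the pair is dominated while the sum agrees), and uses that the urn fraction $A_n/(A_n+B_n)$ converges a.s.\ to a $\Beta(a,b)$ limit which equals $1/2$ with probability $0$, so the urn (and hence the dominated pair) ties only finitely often. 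This is a martingale-convergence argument, not a drift argument, and it needs no uniform lower bound.

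Second, for finite contention you propose to use only that ``$M_l(n)\to\infty$ almost surely,'' obtained by a ``short induction on $l$.'' That is not enough. The paper needs, and proves (Lemma~\ref{lem:starting_low_bound}, which itself invokes the induction hypothesis of Theorem~\ref{thm:max_degree} for $k<k_0$), a polynomial lower bound $M_{k_0}(n)\geq Mn^{\gamma}$ with $\gamma>0$. This is essential because the overtaking estimate (Corollary 15 of \cite{Galashin}) gives $q(i,1)\leq Q(i)/2^i$, so the probability that a newly born vertex ever catches a current top-$k$ vertex of degree $\approx Mn^{\gamma}$ is $\lesssim Q(Mn^{\gamma})2^{-Mn^{\gamma}}$, which is summable in $n$ and lets Borel--Cantelli fire. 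If $M_k(n)$ grew only like $\log n$ or slower, the corresponding series would diverge and the argument would collapse. Your appeal to a vague ``Doob-type estimate on the increments of $\deg v_m$'' does not replace this, and without the quantitative lower bound the finite-contention step is not justified.

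\end{document}
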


The purpose of this proposition is to simplify analisys of the dynamics of $M_k(n)$.
Indeed, let $L_k(n)$ be the number of vertices at time $n$ that has degree equal to $M_k(n)$. The effect of Proposition~\ref{prop:persistent_hub} is that for some random and sufficiently large $N_k < \infty$, $L_k(n) = 1$ for all $n \geq N_k.$

If $M_{k-1}(n)=M_k(n)$, then $M_k(n+1)=M_{k}(n)$, cause $M_{k-1}(n)$ and $M_k(n)$ could not be increased at the same time and we should increase $M_{k-1}(n)$ before $M_k(n)$. If $M_{k-1}(n)>M_k(n)$, to increase $M_k(n)$ we need to draw an edge to a vertex with the degree $M_k(n)$. Therefore the dynamics of $M_k(n)$ is given by the formula
$$M_k(n+1)-M_k(n) = \1\{\deg Y_{n+1}(n) =M_{k}(n),\,M_{k-1}(n)>M_{k}(n)\},\;\text{with}$$
$$\mathbb{E}(M_k(n+1)-M_k(n)|\mathcal{F}_n)$$
$$=\left(\hat{c}^d_k(n) - \left(\hat{c}_k(n)-\frac{M_k(n)L_k(n)}{2n}\right)^{d}\right) \1\{M_{k-1}(n)>M_{k}(n)\},$$
where
$$\hat{c}_l(n)=1-\frac{1}{2n}\sum_{i=1}^{l-1}M_i(n),\;1\leq l\leq k,\;n\in\mathbb{N}.$$
Note that $\hat{c}_l(n)\geq 0$ cause the sum of the degrees is $2n$.
From here, we will reffer to $\hat{c}^d_k(n) - \left(\hat{c}_k(n)-\frac{M_k(n)L_k(n)}{2n}\right)^{d}$ as $p_{n,k}$. Note that cause $M_k(n+1)-M_k(n)$ could only take values $0$ and $1$, if $M_{k-1}(n)>M_{k}(n)$ then $p_{n,k}$ equals to the probability to increase $k$-th maximal degree at the $n$-{th} step conditional on $\mathcal{F}_{n}$.

Before starting the proof, let us describe its structure and main ideas.
We will prove Proposition~\ref{prop:persistent_hub} and Theorem~\ref{thm:max_degree} using an induction over $k$. To do so, we consider them as independend theorems for each $k$. For $k=1$, the convergence $\frac{M_1(n)}{n}\to x^{\ast}$ almost surely and the existence of the persistent hub were proven in \cite{MP14}. We will fix $k_0>1$ and, using statements of Theorem~\ref{thm:max_degree} and Proposition~\ref{prop:persistent_hub} for $k<k_0$ (from here we reffer to them as induction hypothesis), prove them for $k=k_0$. In Section 2, we prove initial estimates using Theorem~\ref{thm:max_degree} for $k<k_0$. In Section 3, we use these estimates to prove the existence of the persistent hub and, so, prove  Proposition~\ref{prop:persistent_hub} for $k=k_0$. In Section 4, we use Proposition~\ref{prop:persistent_hub} along with lemmas from Section 2 to prove Theorem~\ref{thm:max_degree} for $k=k_0$.

\section{Initial estimates}
\label{sec:apriori}
We assume that Theorem~\ref{thm:max_degree} and Proposition~\ref{prop:persistent_hub} hold for $k<k_0$. In this section, we obtain an initial estimate on $M_{k_0}(n)$ along with some technical lemmas.

Recall that $c=1-x^{\ast}/2$, where $x_{\ast}$ is the solution of the equation $1-(1-x/2)^{d}=x$ in the interval $0\leq x\leq 1$. Define the function
\[
f(x,y)=\frac 1 2 \sum_{i=0}^{d-1}y^{d-i-1}(y-x/2)^{i},\;x,y\in\mathbb{R}_+.
\]
Note that $f(x,y)=\frac{y^d-(y-x/2)^d}{x}$ for $x\neq 0$.
We will need the following estimates.
\begin{lemma}
\label{lem:f_est}
$f(x,y)<1$ for $y^{d-1}<2/d$ and $0\leq x\leq 2y$.
\end{lemma}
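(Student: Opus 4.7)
The plan is essentially a termwise bound on the sum defining $f$. Under the hypothesis $0\leq x\leq 2y$, we have $0\leq y-x/2\leq y$, so for every index $i\in\{0,1,\dots,d-1\}$
\[
y^{d-i-1}(y-x/2)^{i} \;\leq\; y^{d-i-1}\cdot y^{i}\;=\;y^{d-1}.
\]
First I would write this observation out, then add up the $d$ terms to get $f(x,y)\leq \frac{d}{2}y^{d-1}$.

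Second, I would substitute the hypothesis $y^{d-1}<2/d$, which immediately yields $f(x,y)<\frac{d}{2}\cdot\frac{2}{d}=1$. Since both steps are strict/non-strict in the directions one would want, the chain of inequalities produces the strict bound $f(x,y)<1$ as claimed.

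There is essentially no obstacle here; the lemma is a one-line pointwise domination. The only thing to be mildly careful about is that the case $x=0$ is allowed by the hypothesis, in which case $f(0,y)=\frac{d}{2}y^{d-1}$, and the bound $y^{d-1}<2/d$ still gives a strict inequality, so no separate treatment is required. I would keep the writeup to two or three sentences and not dress it up.
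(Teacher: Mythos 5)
Your proof is correct and is essentially the same as the paper's: the paper observes that $f(x,y)$ is decreasing in $x$ on $[0,2y]$ and evaluates $f(0,y)=\tfrac{d}{2}y^{d-1}<1$, while you arrive at the same bound $f(x,y)\leq\tfrac{d}{2}y^{d-1}$ by dominating each summand termwise. The two arguments are interchangeable.
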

\begin{proof}
$f(0,y)=d/2y^{d-1}<1.$ Since $f(x,y)$ is the decreasing function over $x$ for $0\leq x\leq 2y$, we have that $f(x,y)<1$ for corresponding $x$.
\end{proof}
\begin{lemma}
\label{lem:c_est}
$c^{d-1}<2/d$.
\end{lemma}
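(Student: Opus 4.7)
The plan is to turn the bound $c^{d-1} < 2/d$ into a statement about where a positive root of a one-variable polynomial sits relative to its unique interior critical point. Writing $c = 1 - x^{\ast}/2$ and substituting into the defining equation $1 - (1 - x^{\ast}/2)^d = x^{\ast}$ rewrites that equation as $c^d = 1 - x^{\ast} = 2c - 1$, so $c$ is a positive root of
\[
g(t) := t^d - 2t + 1.
\]
Trivially $g(1) = 0$ as well, and since $x^{\ast} > 0$ forces $c < 1$, the polynomial $g$ has at least the two distinct positive roots $c$ and $1$.

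Next I would bring in convexity. Computing $g'(t) = d t^{d-1} - 2$ and $g''(t) = d(d-1) t^{d-2} > 0$ shows $g$ is strictly convex on $(0, \infty)$ with a unique critical point $c_0 := (2/d)^{1/(d-1)}$; note $c_0 < 1$ because $d > 2$. Strict convexity allows at most two positive roots, so the full positive zero set of $g$ is $\{c, 1\}$. Since $g(0) = 1 > 0$ and $g$ first decreases then increases around $c_0$, one of these roots must lie in $(0, c_0)$ and the other in $(c_0, \infty)$. As $1 > c_0$, the latter root is $1$, and hence $c \in (0, c_0)$. Raising to the power $d-1$ yields $c^{d-1} < c_0^{d-1} = 2/d$, as desired.

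I do not anticipate a genuine obstacle: once the defining equation is recast in terms of $c$, the lemma reduces to a short convexity observation about a polynomial. The two side checks to keep in mind are that $x^{\ast} > 0$ (so that $c \neq 1$, ruling out the trivial root of $g$) and that $c_0 < 1$ (so that $c_0$ genuinely separates the two roots), both of which are immediate from the hypotheses of Theorem~\ref{thm:max_degree}.
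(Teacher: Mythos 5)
Your proof is correct, and it takes a genuinely different route from the paper's. The paper's proof first writes $c^{d-1} = \tfrac{1-x^{\ast}}{1-x^{\ast}/2} < 2(1-x^{\ast})$ and then reduces the problem to showing $x^{\ast} > 1 - 1/d$; this last step uses convexity of $x \mapsto 1-(1-x/2)^d$ plus an explicit numeric inequality $1/d - (1/2)^d(1+1/d)^d > 0$, which the paper remarks is verified by induction on $d \geq 3$. Your argument instead identifies $c$ as one of the two positive roots of the convex polynomial $g(t)=t^d - 2t + 1$, whose unique critical point is exactly $(2/d)^{1/(d-1)}$; since $1$ is the larger root and sits above the critical point, $c$ must sit below it, giving $c^{d-1} < 2/d$ with no auxiliary computation. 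Both proofs rest on a single convexity observation, but yours is tighter in that it makes the bound $2/d$ appear for a structural reason (it is the critical value $c_0^{d-1}$ of $g$) rather than as the result of a separate estimate on $x^{\ast}$, and it avoids the induction entirely. The paper's version does have the small side benefit of producing the explicit bound $x^{\ast} > 1 - 1/d$, which could be quoted elsewhere, but it is not used again in the paper.
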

\begin{proof}
Note that $x^{\ast}=1-(1-x^{\ast}/2)^d=1-c^d$. Therefore,
$$c^{d-1}=\frac{c^d}{c}=\frac{1-x^{\ast}}{1-x^{\ast}/2}<2(1-x^{\ast}).$$
Now show that $x^{\ast}>1-1/d$. Due to convexity of $1-(1-x/2)^{d}$ on $[0,2]$, it is enough to show that $1-(1-(1-1/d)/2)^d-(1-1/d)>0$:
$$1-(1-(1-1/d)/2)^d-(1-1/d)=1/d-\left(\frac{d+1}{2d}\right)^d=1/d-(1/2)^d(1+1/d)^d>0,$$
for $d>2$ (could be easily proved by an induction starting with $d=3$).
\end{proof}

We will frequently use the following lemma of~\cite{Galashin}.
\begin{lemma}
\label{lem:numbers}
Suppose that a sequence of positive numbers $r_n$ satisfies
\[
r_{n+1} = r_n\left(1+\frac{\alpha}{n+x}\right),~n \geq k
\]
for fixed $\alpha > 0,$ $k > 0$ and $x.$  Then $r_n/n^{\alpha}$ has a positive limit.
\end{lemma}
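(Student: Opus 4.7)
The plan is to iterate the recursion explicitly and reduce the asymptotic analysis of $r_n$ to a standard asymptotic expansion. Since $r_{n+1}/r_n = 1 + \alpha/(n+x)$, telescoping gives, for $n > k$,
\[
r_n = r_k \prod_{j=k}^{n-1}\left(1 + \frac{\alpha}{j+x}\right) = r_k \prod_{j=k}^{n-1}\frac{j+x+\alpha}{j+x}.
\]
Because $r_k > 0$ and all factors are positive (once $n$ is large enough that $j+x > 0$, which holds for all $j \geq k$ possibly after increasing $k$), $r_n > 0$ and one only needs to show the product grows like $n^\alpha$.

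To extract the asymptotics, I would take logarithms and Taylor-expand: $\log(1+\alpha/(j+x)) = \alpha/(j+x) + O(1/j^2)$ as $j \to \infty$. Summing yields
\[
\log r_n - \log r_k = \alpha \sum_{j=k}^{n-1}\frac{1}{j+x} + \sum_{j=k}^{n-1} O\!\left(\frac{1}{j^2}\right).
\]
The second sum converges absolutely as $n \to \infty$, while the first equals $\alpha \log n + \beta + o(1)$ for some finite constant $\beta$ (this is the standard harmonic-sum asymptotic, shifted by $x$). Therefore $\log r_n - \alpha \log n$ converges to a finite limit, and consequently $r_n/n^\alpha$ converges to a strictly positive constant.

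An equivalent and perhaps cleaner route is to recognize the telescoping product as a ratio of Gamma values,
\[
\prod_{j=k}^{n-1}\frac{j+x+\alpha}{j+x} = \frac{\Gamma(n+x+\alpha)\,\Gamma(k+x)}{\Gamma(k+x+\alpha)\,\Gamma(n+x)},
\]
and then apply Stirling's formula in the form $\Gamma(n+x+\alpha)/\Gamma(n+x) = n^\alpha(1+O(1/n))$ to obtain $r_n/n^\alpha \to r_k\,\Gamma(k+x)/\Gamma(k+x+\alpha) > 0$. There is no real obstacle here; the only thing to be careful about is that $k$ may need to be taken large enough (or $x$ restricted) so that all denominators $j+x$ and the Gamma arguments are positive, but this is harmless since we only care about the tail behavior of $r_n$.
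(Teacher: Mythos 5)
Your argument is correct, and both of the routes you sketch (log-expansion plus harmonic-sum asymptotics, and the Gamma-ratio plus Stirling) are standard and sound. One small point worth making precise: the hypothesis that the $r_n$ are positive already forces $1+\alpha/(n+x)>0$ for all $n\ge k$, so in fact $n+x$ cannot lie in $(-\alpha,0]$ for any $n\ge k$; together with $\alpha>0$ this means $j+x>0$ for all but possibly finitely many $j$, and since only tail behavior matters you may indeed relabel $k$ so that all factors and Gamma arguments are positive. So the caveat you flag is harmless for exactly the reason you give.

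For comparison with the source: the paper does not actually prove this lemma --- it quotes it directly from Galashin's paper (cited as~\cite{Galashin}) and uses it as a black box. Your telescoping identity
\[
r_n = r_k\,\frac{\Gamma(n+x+\alpha)\,\Gamma(k+x)}{\Gamma(k+x+\alpha)\,\Gamma(n+x)}
\]
followed by $\Gamma(n+x+\alpha)/\Gamma(n+x)\sim n^{\alpha}$ is essentially the canonical proof one would expect to find in that reference, and it has the extra benefit of identifying the limit explicitly as $r_k\,\Gamma(k+x)/\Gamma(k+x+\alpha)$, which the lemma statement itself does not require but is nice to have. The log-expansion route is slightly more elementary (no special functions) but delivers only existence of a positive limit, not its value; either is a perfectly acceptable self-contained replacement for the citation.
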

Now, we formulate our initial estimate.
\begin{lemma}
\label{lem:starting_low_bound}
There is $\gamma>0$ (which do not depend on $k_0$) such that, with probability $1,$
\(
\inf_{n} M_{k_0}(n)/n^{\gamma} > 0.
\)
\end{lemma}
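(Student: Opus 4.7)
My plan is to use the induction hypothesis to reduce the problem to a clean drift estimate for $M_{k_0}(n)$, and then to convert that estimate into an almost-sure polynomial lower bound via the non-negative supermartingale $Y_n:=n^\gamma/M_{k_0}(n)$, with $\gamma$ depending only on $d$.

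I would first set up the regime in which the drift formula simplifies. The $k=1$ result of \cite{MP14} gives $M_1(n)/n\to x^\ast$ almost surely; the inductive Theorem~\ref{thm:max_degree} for $2\leq k\leq k_0-1$, combined with $c^{d-1}d/2<1$ from Lemma~\ref{lem:c_est} (and, if needed, a routine Borel--Cantelli argument along a geometric subsequence exploiting the monotonicity of $M_k(n)$), gives $M_k(n)=o(n)$ almost surely. Thus $\hat c_{k_0}(n)\to 1-x^\ast/2=c$ almost surely. The inductive Proposition~\ref{prop:persistent_hub} at level $k_0-1$ then furnishes a finite random time $N$ such that for every $n\geq N$ both $\hat c_{k_0}(n)\geq c/2$ and $M_{k_0-1}(n)>M_{k_0}(n)$ hold, so that the indicator $\mathbf{1}\{M_{k_0-1}(n)>M_{k_0}(n)\}$ in the drift formula is identically $1$. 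For such $n$, using the elementary bound $a^d-b^d\geq a^{d-1}(a-b)$ with $a=\hat c_{k_0}(n)$ and $a-b=M_{k_0}(n)L_{k_0}(n)/(2n)$, together with $L_{k_0}(n)\geq 1$, gives
\[
p_{n,k_0}\;\geq\;\hat c_{k_0}(n)^{d-1}\cdot\frac{M_{k_0}(n)L_{k_0}(n)}{2n}\;\geq\;c_1\cdot\frac{M_{k_0}(n)}{n},\qquad c_1:=\frac{1}{2}\left(\frac{c}{2}\right)^{d-1},
\]
and crucially $c_1$ depends only on $d$.

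Next I would fix $\gamma=c_1/3$ and consider $Y_n:=n^\gamma/M_{k_0}(n)$. Writing $m=M_{k_0}(n)$ and using the concavity inequality $(n+1)^\gamma\leq n^\gamma+\gamma n^{\gamma-1}$, valid for $0<\gamma<1$, an explicit computation yields
\[
\mathbb{E}[Y_{n+1}-Y_n\mid\mathcal F_n]\;=\;\frac{(n+1)^\gamma-n^\gamma}{m}-p_{n,k_0}\cdot\frac{(n+1)^\gamma}{m(m+1)}\;\leq\;n^{\gamma-1}\cdot\frac{\gamma(m+1)-c_1 m}{m(m+1)}.
\]
Since $\gamma(m+1)-c_1 m=(\gamma-c_1)m+\gamma\leq -2c_1 m/3+c_1/3\leq -c_1/3$ for every integer $m\geq 1$, the right-hand side is strictly negative; hence $(Y_n)_{n\geq N}$ is a non-negative supermartingale. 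Doob's convergence theorem therefore yields $Y_n\to Y_\infty<\infty$ almost surely, which gives $\liminf_n M_{k_0}(n)/n^\gamma=1/Y_\infty>0$ almost surely. Combining with the trivial bound $M_{k_0}(n)\geq 1$ for $n<N$ (and the finiteness of $N$) then gives $\inf_n M_{k_0}(n)/n^\gamma>0$ almost surely; note that $\gamma=(c/2)^{d-1}/6$ depends only on $d$.

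The principal obstacle is ensuring the supermartingale inequality is robust in the small-$m$ regime, where Taylor-type error bounds would threaten to overwhelm the main drift term. This is resolved by using the exact concavity inequality $(n+1)^\gamma\leq n^\gamma+\gamma n^{\gamma-1}$ (which eliminates Taylor remainders altogether) and by choosing $\gamma$ strictly less than $c_1/2$, so that the numerator $\gamma(m+1)-c_1 m$ is bounded above by a negative constant uniformly over all integers $m\geq 1$.
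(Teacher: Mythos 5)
Your strategy mirrors the paper's: both construct a nonnegative supermartingale of the form $n^\gamma/M_{k_0}(n)$ (the paper uses $C_n/M_{k_0}(n)$ with $C_n$ defined recursively so that $C_n\sim n^\gamma$), bound the drift through an estimate $p_{n,k_0}\gtrsim M_{k_0}(n)/n$, and invoke Doob's convergence theorem, taking a probability limit over the good initial configuration. Your inequality $a^d-b^d\ge a^{d-1}(a-b)$ for $a\ge b\ge 0$ is a slightly cleaner route to the drift bound than the paper's chain (which reduces $d$ to $2$ and then expands), and substituting the closed form $(n+1)^\gamma-n^\gamma\le\gamma n^{\gamma-1}$ for the recursive $C_n$ is an equivalent and tidier normalization. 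So the core argument is correct and essentially the one in the paper.

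There are two points to tighten. First, your $N$ (the time after which $\hat c_{k_0}(n)\ge c/2$ and $M_{k_0-1}(n)>M_{k_0}(n)$ hold forever) is a \emph{last-exit} time, not a stopping time, so ``$(Y_n)_{n\ge N}$ is a supermartingale'' is not well-posed as stated. The paper's remedy, which you should adopt, is to fix a deterministic $n_0$, introduce the genuine stopping time $\lambda_{k_0}(n_0)=\inf\{n>n_0:\hat c_{k_0}(n)<c-\delta\text{ or some }L_k(n)>1,\,k<k_0\}$ and the event $Q_{k_0}(n_0)$ with $\mathbb P(Q_{k_0}(n_0))\to1$ on which $\lambda_{k_0}(n_0)=\infty$; then $Y_{n\wedge\lambda_{k_0}(n_0)}$ is a true supermartingale, Doob applies, and the conclusion holds on $Q_{k_0}(n_0)$, after which $n_0\to\infty$ removes the restriction. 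Second, the parenthetical ``routine Borel--Cantelli along a geometric subsequence'' for upgrading the in-probability bound of Theorem~\ref{thm:max_degree} to $M_k(n)=o(n)$ a.s. is not actually routine: the theorem gives no rate, so the probabilities need not be summable along $n_j=2^j$. What is actually available in the induction is Lemma~\ref{lem:final_est} already proved for $k<k_0$, which gives $M_k(n)/n\to 0$ almost surely directly; this (together with $M_1(n)/n\to x^\ast$ a.s.) is what yields $\hat c_{k_0}(n)\to c$ a.s., and is what the paper implicitly relies on.
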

\begin{proof}
For fixed $n_0\in\mathbb{N}$, define 
$$C_{n+1}=\frac{4n}{4n-(c-\delta)^{d-1}}C_n =\left(1+\frac{(c-\delta)^{d-1}}{4n-(c-\delta)^{d-1}}\right)C_n,\,n\geq n_0,$$
with $C_{n_0} = 1$ and $0<\delta<c$. By Lemma~\ref{lem:numbers}, we have that $C_n/n^{(c-\delta)^{d-1}/4}$ converges to a positive limit.

Introduce events 
$$Q_{k_0}(n_0)=\{N_{k_0-1}<n_0,\hat{c}_{k_0}(n)>c-\delta\;\forall n>n_0\}.$$
By the induction hypothesis ($N_{k_0-1}<\infty$ almost surely and $\hat{c}_{k_0}(n)\to c$ in probability), $\mathbb{P}(Q_{k_0}(n_0))\to 1$ when $n_0\to\infty$.
Introduce Markov moments
$$\lambda_{k_0}(n_0)=
\inf\{n> n_0:L_{k}(n)>1\;\text{for some}\;k<k_0\;\text{or}\;\hat{c}_{k_0}(n)<c-\delta\}.$$
Note that $\lambda_{k_0}(n_0)=\infty$ on $Q_{k_0}(n_0)$. Put $A_{k_0}(n)=C_{n}/M_{k_0}(n)$. We will prove that $A_{k_0}(n\wedge\lambda_{k_0}(n_0))$ (where $x\wedge y=\min(x,y)$)  is a supermartingale for $n>n_0$. Hence by Doob's theorem (Corollary 3, p. 509 of \cite{Shir96}) it converges almost surely to some finite limit. Therefore, there is a random variable $B_{k_0,n_0},$ which is positive on $Q_k(n_0)$, such that $\frac{M_{k_0}(n)}{n^{\gamma}}\geq B_{k_0,n_0}$ almost surely for $\gamma=(c-\delta)^{d-1}/4$ and $n\geq n_0$. Consequently, we have
$$\mathbb{P}\left(\inf_{n\in\mathbb{N}}\frac{M_{k_0}(n)}{n^{\gamma}}>0\right) \geq\mathbb{P}\left(\inf_{n\in\mathbb{N}}\frac{M_{k_0}(n)}{n^{\gamma}}>0,Q_{k_0}(n_0)\right) 
=\mathbb{P}(Q_{k_0}(n_0))\rightarrow 1$$
Now prove that $A_{k_0}(n\wedge\lambda_{k_0}(n_0))$ is a supermartingale, which concludes our proof.
 
Recall that if $M_{k_0-1}(n)>M_{k_0}(n)$ (holds if $L_{k_0-1}=1$, in particualr for $n_0<n<\lambda_{k_0}(n_0)$) then $p_{n,k_0}$ equals to the probability to increase $k_0$-th maximal degree at the $n$-{th} step conditional on $\mathcal{F}_{n}$.
Note that
\begin{align*}
p_{n,k_0} &=\hat{c}_{k_0}^d(n) - \left(\hat{c}_{k_0}(n)-\frac{M_{k_0}(n)L_{k_0}(n)}{2n}\right)^{d} 
\geq \hat{c}_{k_0}^d(n)\left(1-\left(1-\frac{M_{k_0}(n)}{2n\hat{c}_{k_0}(n)}\right)^{d}\right)  \\ 
&\geq \hat{c}_{k_0}^d(n)\left(1-\left(1-\frac{M_{k_0}(n)}{2n\hat{c}_{k_0}(n)}\right)^{2}\right)
=\hat{c}_{k_0}^d(n)\left(\frac{M_{k_0}(n)}{n\hat{c}_{k_0}(n)}-\frac{(M_{k_0}(n))^2}{(2n\hat{c}_{k_0}(n))^2}\right)   \\
&= \hat{c}_{k_0}^{d-1}(n)\frac{M_{k_0}(n)}{n}\frac{4n\hat{c}_{k_0}(n)-M_{k_0}(n)}{4n\hat{c}_{k_0}(n)}\geq \hat{c}_{k_0}^{d-1}(n)\frac{2M_{k_0}(n)}{4n}=\hat{c}_{k_0}^{d-1}(n)\frac{M_{k_0}(n)}{2n}.
\end{align*}
By definition of $p_{n,k_0}$, for $1/M_{k_0}(n+1)$ we get
$$\E\left(1/M_{k_0}(n+1)|\mathcal{F}_n \right)=$$
$$\E\left(\left.\frac{\1\{M_{k_0}(n+1)=M_{k_0}(n)+1\}}{M_{k_0}(n)+1} +\frac{\1\{M_{k_0}(n+1)=M_{k_0}(n)\}}{M_{k_0}(n)}\right| \mathcal{F}_n\right)=$$ $$\left(\frac{p_{n,k_0}\1\{M_{k_0-1}(n)>M_{k_0}(n)\}}{M_{k_0}(n)+1}+\frac{1-p_{n,k_0}\1\{M_{k_0-1}(n)>M_{k_0}(n)\}}{M_{k_0}(n)}\right)=$$
$$\left(\frac{M_{k_0}(n)+1-p_{n,k_0}\1\{M_{k_0-1}(n)>M_{k_0}(n)\}}{(M_{k_0}(n)+1)M_{k_0}(n)}\right).$$ 
Therefore, if $n_0<n<\lambda_{k_0}(n_0)$, then 
$$\E\left(1/M_{k_0}(n+1)|\mathcal{F}_n \right)=\frac{M_{k_0}(n)+1-p_{n,k_0}}{M_{k_0}(n)(M_{k_0}(n)+1)}
=\frac{1}{M_{k_0}(n)}\left(1-\frac{p_{n,k_0}}{M_{k_0}(n)+1}\right)\leq$$ 
$$\frac{1}{M_{k_0}(n)}\left(1-\frac{p_{n,k_0}}{2M_{k_0}(n)}\right)
\leq\frac{1}{M_{k_0}(n)}\left(1-\frac{\hat{c}_{k_0}^{d-1}(n)}{4n}\right)$$
$$\leq\frac{1}{M_{k_0}(n)}\left(1-\frac{(c-\delta)^{d-1}}{4n}\right).$$
which concludes the proof.
\end{proof}

\section{Persistent hub}
\label{sec:hub}
We assume that Theorem~\ref{thm:max_degree} and Proposition~\ref{prop:persistent_hub} holds for $k<k_0$. In this section, we prove Proposition~\ref{prop:persistent_hub} for $k=k_0$ under this assumption.
Our method of the proof bases on the comparison of our model with the standart preferential attachment model, and we use the technique of~\cite{Galashin} developed for the last one.  
We divide the proof of Proposition~\ref{prop:persistent_hub} into two parts. First, we prove that degrees of only finite number of vertices could at some time become $k$-th maximal. Second, we prove that two vertices could have a $k$-th highest degree at the same time only for finite number of time moments. 

Let us introduce some notations:
$$\chi_k(n)=\min\{i\geq n:\deg v_n(i)=M_{k}(i)\},$$
$$U_k=\sum_{n=1}^{\infty}\1\{\chi_{k}(n)<\infty\},$$
$$\psi_{i,j}(n)=\min_{l\geq n}\{\deg v_{i}(l)=\deg v_{j}(l)\}.$$
Here $U_k$ is the number of vertices (of $V$) whose degrees were $k$-th maximal at some moments, $\chi_k(n)$ is the moment it happens for the vertex $v_n$. 

\begin{lemma}
\label{lem:change_of_leader}
$U_k$ is finite almost surely.
\end{lemma}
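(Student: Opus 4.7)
The plan is to apply the Borel--Cantelli lemma to $\sum_n \Pr(\chi_{k_0}(n)<\infty)$, after restriction to a high-probability event, and thus conclude $U_{k_0}<\infty$ almost surely. The core idea is that for $n$ sufficiently large the vertex $v_n$ has degree growing at most like a small multiple of $(t/n)^{\alpha'}$, controlled through a supermartingale in the spirit of Lemma~\ref{lem:starting_low_bound}, while the top $k_0-1$ positions have been stabilized by the induction hypothesis.

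The starting point is the increment bound, obtained from the same mean-value expansion $\hat c^d-(\hat c-p)^d\le d\hat c^{d-1}p$ used in Lemma~\ref{lem:starting_low_bound}: whenever $\deg v_n(t)<M_{k_0}(t)$,
\[
\Pr[\deg v_n(t+1)-\deg v_n(t)=1\mid\mathcal{F}_t]\le d\,\hat{c}_{k_0}(t)^{d-1}\,\frac{\deg v_n(t)}{2t}.
\]
By the induction hypothesis $\hat{c}_{k_0}(t)\to c$ in probability, so on the event $\Omega_{n_0,\delta}:=\{\hat{c}_{k_0}(t)\le c+\delta\ \forall t\ge n_0\}$ (with $\Pr(\Omega_{n_0,\delta})\to 1$ as $n_0\to\infty$), setting $a_t=\prod_{s=n}^{t-1}(1+d(c+\delta)^{d-1}/(2s))\sim (t/n)^{\alpha'}$ with $\alpha':=d(c+\delta)^{d-1}/2<1$ (by Lemmas~\ref{lem:numbers} and~\ref{lem:c_est}) and stopping at $\tau:=\inf\{t:\Omega_{n_0,\delta}\text{ fails or }v_n\text{ enters the top }k_0-1\}$, the process $\deg v_n(t\wedge\tau)/a_{t\wedge\tau}$ is a nonnegative supermartingale starting from $1$. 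It converges almost surely to a finite limit $W_n$ with $\mathbb{E}[W_n]\le 1$. Invoking Proposition~\ref{prop:persistent_hub} for $k<k_0$ to freeze the top $k_0-1$ vertices for $t\ge N_{k_0-1}$, the event $\{\chi_{k_0}(n)<\infty\}$ forces $v_n$ to have maximum degree among the non-top vertices at some time, which through the scaling $\deg v_m(t)\lesssim W_m(t/m)^{\alpha'}$ reduces to $W_n/n^{\alpha'}$ exceeding the corresponding ratios for all other non-top-$k_0-1$ vertices.

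The main obstacle is converting this into a summable probability bound. A first-moment Doob estimate $\Pr(W_n\ge\lambda)\le 1/\lambda$ gives only $\Pr(\chi_{k_0}(n)<\infty)\lesssim n^{-\alpha'}$, which is not summable because $\alpha'<1$. I would overcome this by running the supermartingale argument for higher moments: writing $(D+1)^p-D^p=pD^{p-1}+O(D^{p-2})$ and absorbing the lower-order correction into a slightly inflated auxiliary product, one inductively controls $\mathbb{E}[(\deg v_n(t\wedge\tau)/a_{t\wedge\tau})^p]\le C_p$ uniformly in $t$. Chebyshev then gives $\Pr(\chi_{k_0}(n)<\infty,\Omega_{n_0,\delta}\cap Q_{k_0}(n_0))\le C_p'/n^{p\alpha'}$, which is summable once $p>1/\alpha'$. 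Borel--Cantelli together with $\Pr(\Omega_{n_0,\delta}\cap Q_{k_0}(n_0))\to 1$ as $n_0\to\infty$ then yields $U_{k_0}<\infty$ almost surely. The technical heart of the argument is the higher-moment bookkeeping and the careful matching of the weak lower-bound exponent $\gamma=(c-\delta)^{d-1}/4$ from Lemma~\ref{lem:starting_low_bound} with the induction-hypothesis upper bound $M_{k_0}(t)\le M_{k_0-1}(t)\lesssim t^{\alpha'+o(1)}$, which pins the scale at which $v_n$ can attain the $k_0$-th maximum.
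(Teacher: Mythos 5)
Your route is genuinely different from the paper's, but it has a gap that I don't think can be closed. The supermartingale argument gives $\deg v_n(t\wedge\tau)\le W_n\,a_{t\wedge\tau}$ with $a_t\sim(t/n)^{\alpha'}$ and $\alpha'=d(c+\delta)^{d-1}/2$, which is an \emph{upper} bound on $\deg v_n(t)$. To conclude that $\{\chi_{k_0}(n)<\infty\}$ forces $W_n/n^{\alpha'}$ to be large, you also need a \emph{lower} bound of the same order on the degree of the vertex currently occupying the $k_0$-th slot; the supermartingale only bounds that degree from above, which goes the wrong way. The only lower bound available before Theorem~\ref{thm:max_degree} is proved is $M_{k_0}(t)\ge Mt^{\gamma}$ from Lemma~\ref{lem:starting_low_bound}, with $\gamma=(c-\delta)^{d-1}/4<\alpha'$. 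Plugging that in, the event $W_n(t/n)^{\alpha'}\ge Mt^{\gamma}$ happens for all large $t$ no matter how small $W_n>0$ is (since $\alpha'>\gamma$), so you get no constraint at all. Taking $\delta\downarrow 0$ does not help: $\alpha'$ stays strictly above $\gamma$, and the higher-moment correction only inflates $\alpha'$ further. Chebyshev on $W_n$ therefore never produces a summable bound in this setup.

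The paper avoids all of this by not tracking the growth rate of $\deg v_n$ at all. It invokes Lemma~\ref{lem:walk_domination} (a coupling with the P\'olya-urn walk $T_n$ from~\cite{MP14}): the probability that a new vertex with degree $1$ at time $n$ ever catches a vertex of degree $i$ is dominated by $q(i,1)$, and Corollary 15 of~\cite{Galashin} gives $q(i,1)\le Q(i)/2^{i}$, i.e.\ an exponential rather than polynomial tail in the initial degree gap. Feeding in the weak but sufficient lower bound $i\ge Mn^{\gamma}$ yields $q(Mn^{\gamma},1)\le Q(Mn^{\gamma})/2^{Mn^{\gamma}}$, which is superpolynomially small and trivially summable, so Borel--Cantelli finishes. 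In short, the missing ingredient in your proposal is an exponential-in-degree bound on the catch-up probability; polynomial moment control of $\deg v_n(t)$ alone cannot beat the exponent mismatch against the a priori $n^{\gamma}$ floor.
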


To prove the lemma, we first need a result (which is stated below) from~\cite{MP14} on a random walk that describes the evolution of degrees of two vertices in the preferential attachment model without choices.

Let $T_n=T_n(n_0,A_{n_0},B_{n_0}) = (A_n, B_n)$ for $n \geq n_0$ be random walks on $\Z^2$ started from some point $(A_{n_0},B_{n_0})$ that at time $n$ move one step right or one step up with the conditional probabilities $\frac{A_n}{A_n+B_n}$ and $\frac{B_n}{A_n+B_n}$ respectively. Also, indroduce the stoping times $\pi(i,j)=\min\{n\geq n_0: A_{n}=B_{n}|A_{n_0}=i, B_{n_0}=j\}$ and the function $q(i,j)=\mathbb{P}(\pi(i,j)<\infty)$. Although, the arguments of $q$ and $\pi$ are integers, sometimes in estimates we will write noninterges in arguments meaning the value of the floor function of it.

Lemma 4.2 from~\cite{MP14} stated that
\begin{lemma}
\label{lem:walk_domination}
The following inequality holds for any positive integers $i$ and $j$
$$\mathbb{P}(\psi_{i,j}(n)<\infty|\mathcal{F}_{n})\leq q(\deg v_{i}(n),\deg v_{j}(n)).$$
\end{lemma}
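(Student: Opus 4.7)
The plan is to reduce Lemma \ref{lem:walk_domination} to a coupling between the two-vertex degree process in the max-choice model and the pure preferential-attachment walk $T_m = (A_m, B_m)$ started at $T_n = (a, b)$, where $a := \deg v_i(n)$ and $b := \deg v_j(n)$. Without loss of generality take $a \geq b$; the case $a = b$ is trivial since then $\psi_{i,j}(n) = n$ and $q(a,a) = 1$.

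The first step, and the main obstacle, is an attachment-probability comparison: for every $m \geq n$ with $\alpha := \deg v_i(m) \geq \beta := \deg v_j(m)$,
\[
\frac{\Pr(Y_{m+1} = v_i \mid \mathcal{F}_m)}{\alpha} \;\geq\; \frac{\Pr(Y_{m+1} = v_j \mid \mathcal{F}_m)}{\beta}.
\]
Intuitively this says the max-choice rule skews toward the richer of the two vertices more strongly than ordinary preferential attachment. For a sampled vertex $v$ to win the edge, no other sample may have strictly larger degree (with a fair-coin tie-break among maximum-degree samples), and since $\alpha \geq \beta$ this ``no opponent beats me'' event is easier for $v_i$ than for $v_j$. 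I would verify the inequality by enumerating the outcomes of the $d$ i.i.d.\ samples $X^1_m, \ldots, X^d_m$ according to how many equal $v_i$ (respectively $v_j$) and to the degrees of the remaining samples, carefully accounting for ties against samples of degree exactly $\alpha$ (respectively $\beta$). Equivalently, the inequality says that conditional on the step-$(m+1)$ edge attaching to $\{v_i, v_j\}$, the probability it attaches to $v_i$ is at least $\alpha/(\alpha+\beta)$, which is exactly the pure-PA conditional probability.

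Given this, I would set up the coupling as follows. Let $\sigma_1 < \sigma_2 < \cdots$ be the times $m \geq n$ at which $Y_{m+1} \in \{v_i, v_j\}$, and advance $T$ only at these times using a common uniform random variable $U_k$ at each $\sigma_k$ to drive both the max-choice choice between $v_i$ and $v_j$ and the PA walk's right/up step. The key invariant to maintain is $D^{\mathrm{mc}}_k := \deg v_i(\sigma_k) - \deg v_j(\sigma_k) \geq A_{\sigma_k} - B_{\sigma_k} =: D^{\mathrm{PA}}_k$ for as long as neither side has hit $0$. Since each $\sigma_k$-step adds exactly $1$ to both $\deg v_i + \deg v_j$ and $A + B$, these sums remain equal, so the invariant forces $\deg v_i(\sigma_k) \geq A_{\sigma_k}$, and in particular the max-choice ratio $\deg v_i(\sigma_k)/(\deg v_i(\sigma_k)+\deg v_j(\sigma_k))$ is at least the PA ratio $A_{\sigma_k}/(A_{\sigma_k}+B_{\sigma_k})$. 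Combined with the step-one inequality, the conditional probability of picking $v_i$ in max-choice is at least the PA right-step probability, so the standard common-uniform Bernoulli coupling (``PA right $\Rightarrow$ mc picks $v_i$'') preserves the invariant.

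Finally, on the event $\{\psi_{i,j}(n) < \infty\}$, the gap $D^{\mathrm{mc}}_k$ hits $0$ at some $k$, so the invariant gives $D^{\mathrm{PA}}_k \leq 0$ at that step; since $D^{\mathrm{PA}}$ changes by $\pm 1$ per step and started at $a - b \geq 0$, it must have equaled $0$ at or before this step, i.e.\ $\pi(a,b) < \infty$ in the coupled walk. Taking conditional probabilities given $\mathcal{F}_n$ then yields $\Pr(\psi_{i,j}(n) < \infty \mid \mathcal{F}_n) \leq q(a,b)$, as required.
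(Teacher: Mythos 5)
The paper does not prove this lemma; it simply quotes it as Lemma~4.2 of \cite{MP14}, and later explicitly invokes ``the coupling used in the proof of Lemma~4.2 from~\cite{MP14}.'' So there is no in-paper proof to compare against, but your plan does match the coupling strategy the paper is leaning on: reduce to a one-step attachment-probability comparison, couple the two-vertex degree pair with the P\'olya-type walk $T$ via a common uniform at each step where the new edge lands on $\{v_i,v_j\}$, maintain the gap invariant $\deg v_i-\deg v_j\geq A-B$ with equal sums, and finish with an intermediate-value argument on the PA walk. That outline is correct, including the equivalence you observe between the scaled inequality $\Pr(Y_{m+1}=v_i\mid\filt_m)/\alpha\geq\Pr(Y_{m+1}=v_j\mid\filt_m)/\beta$ and the conditional statement $\Pr(Y_{m+1}=v_i\mid\filt_m,\,Y_{m+1}\in\{v_i,v_j\})\geq\alpha/(\alpha+\beta)$, and the observation that once the mc gap reaches $0$ the PA gap is $\leq 0$ and hence must have passed through $0$.

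The one place you should not leave as a sketch is precisely the attachment-probability comparison, because the intuition you state (``no opponent beats me is easier for $v_i$'') by itself only yields $\Pr(Y_{m+1}=v_i)\geq\Pr(Y_{m+1}=v_j)$, not the stronger per-unit-degree inequality you actually need. It does hold, and the clean way to see it is this. Writing $F(x)$ for the conditional probability that a single degree-proportional sample has degree at most $x$ and $G(x)$ for the probability its degree is strictly less than $x$ (so $F(\alpha)>G(\alpha)$ since $v_i$ itself has degree $\alpha$), and using uniform tie-breaking among samples attaining the maximum sampled degree (the only sensible reading of the tie rule for $d>2$, and immaterial for degree statistics), one gets
\[
\Pr\left(Y_{m+1}=v_i\mid\filt_m\right)=\frac{\alpha}{2m}\cdot\frac{F(\alpha)^d-G(\alpha)^d}{F(\alpha)-G(\alpha)}=\frac{\alpha}{2m}\sum_{l=0}^{d-1}F(\alpha)^{l}G(\alpha)^{\,d-1-l},
\]
and the analogous formula with $\beta$ for $v_j$. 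Since $F$ and $G$ are nondecreasing, the factor $\sum_{l=0}^{d-1}F(\cdot)^lG(\cdot)^{d-1-l}$ is nondecreasing in its argument, which is exactly the claimed inequality for $\alpha\geq\beta$. Beyond this, the only thing to make explicit in a polished write-up is that the times $\sigma_k$ are stopping times and the comparison holds conditionally on the full pre-$\sigma_k$ history (not merely on the current degree pair), so that the common-uniform Bernoulli coupling is legitimate even though the mc pair is not a Markov chain in $(\deg v_i,\deg v_j)$.
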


Let us prove Lemma \ref{lem:change_of_leader}.
\begin{proof}
By Lemma~\ref{lem:starting_low_bound}, we get 
\(
M_k(n)\geq M n^{\gamma}
\)
for some random $M>0$ almost surely. Hence, at time $n$ there are at least $k$ vertices $v_{i_1},...,v_{i_k}$ with degrees not less then $Mn^{\gamma}$ with probability 1. A degree of the vertex $v_{n+1}$ could become $k$-th maximal only if at some moment $\tilde{n}>n$ its degree becomes higher than at least one of the degrees $\deg v_{i_1}(\tilde{n}),...,\deg v_{i_k}(\tilde{n})$.
Due to Lemma~\ref{lem:walk_domination} (as in~\cite{MP14}), we could construct $k$ versions $\pi_{l}(i,j)$, $1\leq l\leq k$, of $\pi(i,j)$, such that $$\1\{\chi_k(n+1)<\infty\}\leq\sum_{l=1}^{k}\1\{\psi_{i_l,n+1}(n)<\infty\}\leq$$
$$\sum_{l=1}^{k}\1\{\pi_{l}(\deg v_{i_l}(n),1)<\infty\}\leq \sum_{l=1}^{k}\1\{\pi_{l}(Mn^{\gamma},1)<\infty\}\quad a.s.$$
Fix $C>0$. Then
$$U_{k}\1\{M>C\}=\sum_{n=1}^{\infty}\1\{\chi_k(n)<\infty\}\1\{M>C\} <$$ $$\sum_{n=1}^{\infty}\sum_{l=1}^{k}\1\{\pi_{l}(Mn^{\gamma},1)<\infty\}\1\{M>C\}\leq
\sum_{n=1}^{\infty}\sum_{l=1}^{k}\1\{\pi_{l}(Cn^{\gamma},1)<\infty\}\1\{M>C\}\leq$$
$$\sum_{n=1}^{\infty}\sum_{l=1}^{k}\1\{\pi_{l}(Cn^{\gamma},1)<\infty\}.$$
 
Corollary 15 of~\cite{Galashin} gives us the following estimate:
$$q(i,1)\leq\frac{Q(i)}{2^i}\;\text{for any integer}\;i$$
for some polynomial function $Q(x)$. Therefore, the expectations
$$\mathbb{E}\1\{\pi_{l}(Cn^{\gamma},1)<\infty\}=q(Cn^{\gamma},1)\leq\frac{Q(Cn^{\gamma})}{2^{Cn^{\gamma}}}$$
forms a convergent series, and the last sum is finite almost surely by Borel-Cantelli Lemma. Since $M>0$ with probability $1$,
$$\mathbb{P}(U_{k}<\infty)=\mathbb{P}(\{U_{k}<\infty\}\bigcup\bigcup_{n\in\mathbb{N}}\{M>1/n\})=1.$$
\end{proof}

Now let $J_k$ denote the set of vertices whose degrees become $k$-th maximal at some moment. According to Lemma~\ref{lem:change_of_leader}, $J_k$ is finite almost surely. Introduce random moments
$$\zeta_l(v_i,v_j)=\inf\{n>\zeta_{l-1}(v_i,v_j):$$
$$\deg v_i(n-1)\neq\deg v_j(n-1)\,\text{and}\,\deg v_i(n)=\deg v_j(n) \},\;\zeta_0(v_i,v_j)=0,$$
$$N(v_i,v_j)=\sup\{l:\zeta_{l}(v_i,v_j)<\infty\},$$
$$\xi_k=\sup\{\zeta_{N(v_i,v_j)}(v_i,v_j)|v_i\in J_k,v_j\in J_k\}.$$
Note that almost sure finitness of $\xi_k$ implies Proposition~\ref{prop:persistent_hub} cause any vertex that become $k$-th maximal at any time is in $J_k$, and an order of degrees of vertices from $J_k$ does not change after the moment $\xi_k$. Thus, to complete the proof of Proposition~\ref{prop:persistent_hub} we need the following lemma:
\begin{lemma}
\label{lem:change_of_leadership}
$\xi_k$ is finite almost surely.
\end{lemma}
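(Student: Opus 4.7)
The plan is to combine the almost-sure finiteness of $J_{k_0}$ (Lemma~\ref{lem:change_of_leader}) with the P\'olya-urn domination of Lemma~\ref{lem:walk_domination}, following the strategy used for the $k=1$ case in~\cite{MP14}, which in turn adapts Galashin's persistent-hub argument for standard preferential attachment.

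First, since $J_{k_0}$ is almost surely finite, $\xi_{k_0}$ is the maximum over an a.s.\ finite collection of pairs of vertices, so it suffices to show that for every deterministic pair of vertex indices $i<j$ one has $N(v_i,v_j)<\infty$ almost surely; a countable union over (deterministic) pairs then yields $\xi_{k_0}<\infty$ a.s.

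Next I would fix deterministic $i<j$ and analyze the successive coincidence times $\zeta_l:=\zeta_l(v_i,v_j)$, with common degree $D_l:=\deg v_i(\zeta_l)$, together with the first post-coincidence separation times $\tau_l:=\inf\{n>\zeta_l:\deg v_i(n)\neq\deg v_j(n)\}$. By monotonicity of degrees and the fact that between consecutive coincidences the two degrees must first separate, one has $D_l\geq D_{l-1}+1$; in particular $D_l\to\infty$ on $\{N(v_i,v_j)=\infty\}$. At $\tau_l$ the two degrees are $\{D_l,D_l+1\}$, and Lemma~\ref{lem:walk_domination} applied at this stopping time gives
\[
\Pr\bigl(\zeta_{l+1}<\infty\,\big|\,\mathcal{F}_{\tau_l}\bigr)\leq q(\deg v_i(\tau_l),\deg v_j(\tau_l))\leq q(D_l,D_l+1).
\]

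The hard part will be that $q(a,a+1)$ does not tend to zero as $a\to\infty$: by the martingale convergence of the P\'olya urn, $q(a,a+1)\geq\Pr(W>1/2)$ with $W\sim\Beta(a,a+1)$, and the latter approaches $1/2$, so a direct iterated Borel--Cantelli argument fails. The way around this, following~\cite{MP14}, is to use that although the single-step bound is not small, the \emph{total} number of diagonal visits in a P\'olya urn is almost surely finite: the fraction $A_n/(A_n+B_n)$ converges a.s.\ to $W\sim\Beta(A_{\tau_0},B_{\tau_0})$ and, since $\Pr(W=1/2)=0$, the signed difference $A_n-B_n$ eventually has a constant sign. I would accordingly couple the pair $(\deg v_i(n),\deg v_j(n))_{n\geq\tau_0}$, restricted to the subsequence of times at which $Y_n\in\{v_i,v_j\}$, to a single P\'olya urn $(A_m,B_m)$ with matching initial values, in such a way that every coincidence in our process forces a diagonal visit of the urn. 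Lemma~\ref{lem:walk_domination} --- which reflects that the max-choice mechanism only strengthens the repulsion of the pair from the diagonal --- provides precisely the step-wise input needed for this coupling. Almost sure finiteness of diagonal visits in the urn then transfers to $N(v_i,v_j)<\infty$ almost surely, completing the proof.
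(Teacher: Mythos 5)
Your proposal is correct and takes essentially the same route as the paper: reduce via a.s.\ finiteness of $J_{k_0}$ to a fixed pair $(v_i,v_j)$, then transfer to a coupled P\'olya urn restricted to the times at which one of the two degrees changes, and use that the urn ratio converges a.s.\ to a $\Beta$-distributed limit (which equals $1/2$ with probability $0$), so the urn has only finitely many diagonal visits, dominating $N(v_i,v_j)$. Your aside explaining why an iterated Borel--Cantelli with the single-step bound $q(D_l,D_l+1)$ fails is a correct observation but not part of the paper's argument.
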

\begin{proof}
Since $J_k$ is finite almost surely, it is enough to prove that for any $v_i,v_j\in V$ $N(v_i,v_j)$ is finite almost surely. To do so we will use the random walk $T_n$ with $n_0=\max\{i+1,j+1\}$, $A_{n_0}=\deg v_i(n_0)$, $B_{n_0}=\deg v_j(n_0)$. Let $R(n_0,i,j)$ be the number of times $n>n_0$ such that $A_n=B_n$, and let $n_0\leq\rho_1(i,j)<\rho_2(i,j)<...$ be moments when either $\deg v_i$ or $\deg v_j$ is changed. Then due to the coupling used in the proof of Lemma 4.2 from~\cite{MP14} there is version of $T$, such that $\min\{\deg v_i(\rho_n),\deg v_j(\rho_n)\}$ is dominated by $\min\{A_{n},B_{n}\}$ for $n\geq n_0$, which implies $N(v_i,v_j)\leq R$ (since $A_n+B_n=\deg v_i(\rho_n)+\deg v_j(\rho_n)$). 

It is a standard fact about P\'olya urn model that if $T_n=(A_n,B_n)$ starts from a point $(a,b)$, then the fraction $A_{n}/(A_{n}+B_{n})$ tends in law to a random variable $H(a,b)$ as $n$ tends to infinity, where $H(a,b)$ has beta probability distribution:
$$H(a,b)\sim\Beta(a,b).$$
(See, e.g., Theorem 3.2 in \cite{M09} or Section 4.2 in \cite{JK77}). Thus, the limit of $A_n/(A_n + B_n)$ exists almost surely, and it takes the value $1/2$ with probability $0$ for any starting point of the process $T$.  Hence, this fraction can be equal to $1/2$ only finitely many times almost surely, and so $R$ is finite almost surely, which completes the proof.
\end{proof}

\section{Final result}
\label{sec:finald}

Fix $0<\delta<2/d-c^{d-1}$ (by Lemma~\ref{lem:c_est}, $2/d>c^{d-1}$). For any fixed $n_0>0$, we introduce the events $$D_{k_0}(n_0,\delta)=\{L_l(n)=1,\,c-\delta<\hat{c}_{k_0}(n)<c+\delta,\,M_{k_0}(n)>n^{\gamma}/n_0 \;\forall n\geq n_0\;\forall k\leq k_0\},$$
and the Markov moment
$$\eta_{k_0}(n_0,\delta)=\inf\{n \geq n_0:L_k(n) > 1\,\text{for some}\, k\leq k_0,\, \text{or}\,$$ $$\hat{c}_{k_0}(n)>c+\delta,\,\text{or}\,\hat{c}_{k_0}(n)<c-\delta,\,\text{or}\,M_{k_0}(n)\leq n^{\gamma}/n_0\}.$$
Note that by the induction assumption for $k<k_0$, Proposition~\ref{prop:persistent_hub} and Lemma~\ref{lem:starting_low_bound} (both for $k=k_0$) we have that $$\mathbb{P}(D_{k_0}(n_0,\delta))=\mathbb{P}(\eta_{k_0}(n_0,\delta)=\infty)\to 1\, \text{as}\, n_0\to\infty.$$
Now, let prove Theorem~\ref{thm:max_degree} for $k=k_0$
\begin{lemma}
\label{lem:final_est}
With probability $1,$
\(
M_k(n)/n \to 0.
\)
\end{lemma}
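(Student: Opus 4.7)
The plan is to adapt the supermartingale argument of Lemma~\ref{lem:starting_low_bound} to obtain an upper bound (rather than a lower bound) on $M_{k_0}(n)$. The key additional ingredient is the elementary inequality $a^d-b^d\leq d a^{d-1}(a-b)$ for $a\geq b\geq 0$, which applied with $a=\hat c_{k_0}(n)$ and $b=\hat c_{k_0}(n)-M_{k_0}(n)L_{k_0}(n)/(2n)$ yields
\[
p_{n,k_0}\leq d\,\hat c_{k_0}^{d-1}(n)\cdot\frac{M_{k_0}(n)L_{k_0}(n)}{2n}.
\]

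First, I would fix $\delta>0$ small enough that $\beta:=d(c+\delta)^{d-1}/2<1$; this is possible precisely because $dc^{d-1}/2<1$ by Lemma~\ref{lem:c_est}. On the event $\{n<\eta_{k_0}(n_0,\delta)\}$ we have $L_{k_0}(n)=1$, $L_{k_0-1}(n)=1$ (so $M_{k_0-1}(n)>M_{k_0}(n)$) and $\hat c_{k_0}(n)<c+\delta$, so the formula for $\E(M_{k_0}(n+1)-M_{k_0}(n)|\mathcal F_n)$ recalled in Section~1 combined with the bound above gives
\[
\E(M_{k_0}(n+1)\mid\mathcal F_n)\leq M_{k_0}(n)\left(1+\frac{\beta}{n}\right).
\]

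Next, mirroring the construction in Lemma~\ref{lem:starting_low_bound}, I would set $C_{n_0}=1$ and $C_{n+1}=C_n(1+\beta/n)$, so that $C_n/n^{\beta}$ tends to a positive limit by Lemma~\ref{lem:numbers}. Put $A_n=M_{k_0}(n)/C_n$; the displayed bound gives $\E(A_{n+1}\mid\mathcal F_n)\leq A_n$ whenever $n<\eta_{k_0}(n_0,\delta)$, so $A_{n\wedge\eta_{k_0}(n_0,\delta)}$ is a nonnegative supermartingale and converges almost surely to a finite limit by Doob's theorem. On the event $D_{k_0}(n_0,\delta)=\{\eta_{k_0}(n_0,\delta)=\infty\}$ this gives that $M_{k_0}(n)/n^{\beta}$ is bounded, hence $M_{k_0}(n)/n\to 0$ since $\beta<1$. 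Because $\mathbb P(D_{k_0}(n_0,\delta))\to 1$ as $n_0\to\infty$ by the induction hypothesis together with Proposition~\ref{prop:persistent_hub} and Lemma~\ref{lem:starting_low_bound} at level $k_0$, we conclude $M_{k_0}(n)/n\to 0$ almost surely.

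There is no real obstacle here beyond the careful bookkeeping: the whole point of Lemma~\ref{lem:c_est} was to guarantee that the contraction rate $\beta$ can be chosen strictly less than $1$, and once that is in place the argument is exactly the one used for the lower bound, with the inequality on $p_{n,k_0}$ reversed. The only thing to be a little careful about is restricting the martingale computation to the stopped process, since off $D_{k_0}(n_0,\delta)$ the bound on $\hat c_{k_0}(n)$ may fail.
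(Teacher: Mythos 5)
Your proof is correct and takes essentially the same route as the paper: both bound $p_{n,k_0}/M_{k_0}(n)\leq\tfrac{d}{2}\hat c_{k_0}^{d-1}(n)/n$ on the stopped event (the paper via $f(x,y)\leq f(0,y)=\tfrac{d}{2}y^{d-1}$ and Lemmas~\ref{lem:f_est}--\ref{lem:c_est}, you via the convexity inequality $a^d-b^d\leq d a^{d-1}(a-b)$), then run the identical stopped-supermartingale argument with $C_{n+1}=C_n(1+\beta/n)$ and $\beta<1$ to conclude $M_{k_0}(n)=O(n^\beta)$ on an event of probability tending to one. The only cosmetic difference is that you carry the exponent $\beta=d(c+\delta)^{d-1}/2$ explicitly where the paper works with $1-\beta$.
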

\begin{proof}


Recall that if $M_{k-1}(n)>M_k(n)$ (in particular, on $D_k(n0,\delta)$) then $p_{n,k}$ equals to the conditional probability to increase $M_k(n)$ conditional on $\filt_n.$ Note that for $n$ such that $n_0 \leq n \leq \eta_{k,C},$
\[
p_{n,k}
=\hat{c}_k^d(n)-\left(\hat{c}_k(n)-\frac{M_k(n)}{2n}\right)^{d}
=\frac{M_k(n)}{2n}\left(\sum_{i=0}^{d-1}\hat{c}_k^{d-i-1}(n)\left(\hat{c}_k(n)-\frac{M_k(n)}{2n}\right)^{i}\right).
\]
Hence, $\frac{p_{n,k}}{M_k(n)}=\frac{1}{n}f(\frac{M_k(n)}{n},\hat{c}_k(n))$.
From Lemmas~\ref{lem:f_est},~\ref{lem:c_est}, it follows that for any small enought $\delta>0$ there is $\beta>0$ so that $f(x,y)<1-\beta$ if $y<c+\delta$ for $0\leq x\leq 2y$.

Consider the expectation:
\[
\Exp\left(\left.\frac{M_k(n+1)}{M_k(n)} \right| \filt_{n}\right)
=\frac{p_{n,k}(M_k(n)+1)}{M_k(n)}+1-p_{n,k}
=1+\frac{p_{n,k}}{M_k(n)}.
\]

Therefore, for small enough $\delta>0$ there is $\beta>0$ such that $\Exp(M_k(n+1)|\filt_{n})<(1+(1-\beta)/n)M_k(n)$ for $n_0\leq n<\eta_{k}(n_0,\delta)$. Set $A_{k}(n) = M_k(n)/C_{n,k},$ where $C_{n+1,k}=(1+(1-\beta)/n)C_{n,k},$ $n\geq n_0$, $C_{n_0,k}=1$. We have that
$$\mathbb{E}\left(\left.\frac{A_k(n+1\wedge\eta_k(n_0,\delta))}{A_k(n\wedge\eta_k(n_0,\delta))}\right|\mathcal{F}_{n}\right)=$$
$$\mathbb{E}\left(\left.\frac{A_k(n+1)}{A_k(n)}\1\{n+1\leq\eta_k(n_0,\delta)\}+\1\{n+1>\eta_k(n_0,\delta)\}\right|\mathcal{F}_n\right)=$$
$$\1\{\eta_k(n_0,\delta)>n\}\mathbb{E}\left(\left.\frac{A_k(n+1)}{A_k(n)}\right|\mathcal{F}_n\right)+ \1\{\eta_k(n_0,\delta)\leq n\}=$$
$$\1\{\eta_k(n_0,\delta)>n\}\frac{C_{n,k}}{C_{n+1,k}}\mathbb{E}\left(\left.\frac{M_k(n+1)}{M_k(n)}\right|\mathcal{F}_n\right) +\1\{\eta_k(n_0,\delta)\leq n\}\leq$$
$$\1\{\eta_k(n_0,\delta)>n\}\frac{1+(1-\beta)/n}{1+(1-\beta)/n}+\1\{\eta_k(n_0,\delta)\leq n\}=1.$$
Thus, $A_k(n\wedge\eta_k(n_0,\delta))$ is a supermartingale. By Lemma~\ref{lem:numbers}, we have that $C_{n,k}n^{-1+\beta}$ converges to a positive limit. Therefore, by Doob's theorem we have that $A_k(n\wedge\eta_k(n_0,\delta))$ tends to a finite limit with probability 1, and, in particular, there is a random constant $B_k > 0$ so that $M_k(n\wedge\eta_k(n_0,\delta)) \leq B_k n^{1-\beta}$ almost surely.
Thus, $M_k(n\wedge\eta_k(n_0,\delta))/n \to 0$ almost surely as $n\to\infty$, and, since $\mathbb{P}(\eta_k(n_0,\delta)=\infty)\to 1$ as $n_0\to\infty$, $M_k(n)/n\to 0$ almost surely as $n\to\infty$.
\end{proof}

Now, concider the expectation for $n_0\leq n\leq\eta_{k}(n_0,\delta)$ and some $0<\alpha<1$
$$\mathbb{E}\left(\left.\frac{M_k(n+1)/(n+1)^{\alpha}}{M_k(n)/n^{\alpha}}\right|\mathcal{F}_n\right)= \frac{M_k(n)+\hat{c}_k^d(n)-\left(\hat{c}_k(n)-\frac{M_k(n)}{2n}\right)^{d}}{M_k(n)}\frac{n^{\alpha}}{(n+1)^{\alpha}}=$$
$$\left(1+\frac{1}{2n}\left(d\hat{c}_k^{d-1}(n)+\sum_{i=1}^{d-1}(-1)^i {i\choose d}\hat{c}_k^{d-1-i}(n)\left(\frac{M_k(n)}{2n}\right)^{i}\right)\right)\frac{1}{(1+1/n)^{\alpha}}.$$
By the induction assumption and Lemma~\ref{lem:final_est},
$$\Delta_k(n)=d\hat{c}_k^{d-1}(n)+\sum_{i=1}^{d-1}(-1)^i {i\choose d}\hat{c}_k^{d-1-i}(n)\left(\frac{M_k(n)}{2n}\right)^{i}\to c^{d-1}d \text{ a.s. as } n\to\infty.$$
In particular, for any $\epsilon>0$ 
$$\mathbb{P}(\Delta_k(n)<c^{d-1}d+\epsilon\,\text{for}\,n>n_0)\to 1 \text{ and}$$
$$\mathbb{P}(\Delta_k(n)>c^{d-1}d-\epsilon\,\text{for}\,n>n_0)\to 1\, \text{as}\,n_0\to\infty.$$
Therefore,
$$\mathbb{P}\left(\mathbb{E}\left(\left.\frac{M_{k}(n+1)/(n+1)^{c^{d-1}d/2-\epsilon}}{M_{k}(n)/n^{c^{d-1}d/2-\epsilon}}\right|\mathcal{F}_n\right)>1,\, \text{for}\, n>n_0\right)\geq$$
$$\mathbb{P}\Bigg(\mathbb{E}\left(\left.\frac{M_{k}(n+1)/(n+1)^{c^{d-1}d/2-\epsilon}}{M_{k}(n)/n^{c^{d-1}d/2-\epsilon}}\right|\mathcal{F}_n\right)>1,\,$$ $$\Delta_k(n)>c^{d-1}d-\epsilon\, \text{for}\, n>n_0, \eta_{k}(n_0,\delta)=\infty\Bigg)\geq$$
$$\mathbb{P}\Bigg(\frac{1+(c^{d-1}d/2-\epsilon/2)/n}{(1+1/n)^{c^{d-1}d/2-\epsilon}}>1,$$
$$\Delta_k(n)>c^{d-1}d-\epsilon,\, \text{for}\, n>n_0, \eta_{k}(n_0,\delta)=\infty\Bigg)\to 1\,\text{as}\,n_0\to\infty\, \text{and}$$
$$\mathbb{P}\left(\mathbb{E}\left(\left.\frac{M_{k}(n+1)/(n+1)^{c^{d-1}d/2+\epsilon}}{M_{k}(n)/n^{c^{d-1}d/2+\epsilon}}\right|\mathcal{F}_n\right)<1,\, \text{for}\, n>n_0\right)\geq$$
$$\mathbb{P}\Bigg(\mathbb{E}\left(\left.\frac{M_{k}(n+1)/(n+1)^{c^{d-1}d/2+\epsilon}}{M_{k}(n)/n^{c^{d-1}d/2+\epsilon}}\right|\mathcal{F}_n\right)<1,\,$$ $$\Delta_k(n)<c^{d-1}d+\epsilon\, \text{for}\, n>n_0, \eta_{k}(n_0,\delta)=\infty\Bigg)\geq$$
$$\mathbb{P}\Bigg(\frac{1+(c^{d-1}d/2+\epsilon/2)/n}{(1+1/n)^{c^{d-1}d/2+\epsilon}}<1,\,$$ $$\Delta_k(n)<c^{d-1}d+\epsilon\, \text{for}\, n>n_0, \eta_{k}(n_0,\delta)=\infty\Bigg)\to 1$$
as $n_0\to\infty$. Introduce Markov moments 
$$\nu_{k,n_0,\epsilon}=\inf\left\{\left.n>n_0: \mathbb{E}\left(\frac{M_{k}(n+1)/(n+1)^{c^{d-1}d/2-\epsilon}}{M_{k}(n)/n^{c^{d-1}d/2-\epsilon}}\right|\mathcal{F}_n\right)\leq 1, \text{or}\right.$$
$$\left.\mathbb{E}\left(\left.\frac{M_{k}(n+1)/(n+1)^{c^{d-1}d/2+\epsilon}}{M_{k}(n)/n^{c^{d-1}d/2+\epsilon}}\right|\mathcal{F}_n\right)\geq 1\right\}.$$
Note that $\mathbb{P}(\nu_{k,n_0,\epsilon}=\infty)\to 1$ as $n_0\to\infty$. Let $$A_{k}(n)=\frac{M_{k}(n)}{n^{c^{d-1}d/2+\epsilon/2}} \text{ and } B_{k}(n)=\frac{n^{c^{d-1}d/2-\epsilon/2}}{M_{k}(n)}.$$
Then $A_k(n\wedge\nu_{k,n_0,\epsilon})$ and $B_k(n\wedge\nu_{k,n_0,\epsilon})$ are supermartingales, and from Doob's theorem, $$\frac{M_k(n)}{n^{c^{d-1}d/2-\epsilon}}\to \infty \text{ and } \frac{M_k(n)}{n^{c^{d-1}d/2+\epsilon}}\to 0 \text{ almost surely},$$
which imply our theorem.
\section*{Acknowledgements.}
The author is grateful to Professor Itai Benjamini for proposing the model and to Maksim Zhukovskii for helpful discussions.
\bibliographystyle{alpha}
\bibliography{High degree vertices}

\end{document}